\begin{document}


\section{Introduction}


Real-world systems often exhibit a multiscale behavior. A typical example of that is the weather, where atmospheric dynamics take place at the scale of seconds but it is only their aggregate effect that determines their long-term dynamics \cite{climate}. To study their behavior, models are typically designed using different spatial or temporal units in each component. However, these models become difficult to work with as it is not feasible to observe the fast varying variables. Models are then simplified taking appropriate limits that allow to disregard the fast varying components explicitly while taking into consideration the way in which they alter the long term dynamics of the system. This leads to coarse-grained models that allow for a much better understanding of the desired behaviors of the system. When it comes to systems of Stochastic Differential Equations (SDEs), these are the well-known averaging and homogenization equations \cite{multiscalebook}.

The question that naturally arises in this scenario is whether it is possible to fit the coarse-grained model to data coming from the full multiscale model, which is the one to be observed. This problem was first studied in \cite{stupav} for a particular class of homogenised SDE, where they showed that the Maximum Likelihood Estimators (MLE) for the effective drift and diffusion coefficients are asymptotically biased when evaluated at multiscaled data unless the data is subsampled at an appropriate rate. More specifically, they showed that if observations from the multiscale process are sampled at rate $\delta=\epsilon^\alpha$ then the MLE for the effective drift and diffusion coefficients evaluated on a discretized path $\{X^{\epsilon}_{i\delta}\}_{i=0}^{N}$ are consistent if $0<\alpha<1$, i.e. if the sampling rate is between the two characteristic time-scales of the system. In \cite{tessystuart}, the results of \cite{stupav} for the drift estimation are extended to more general multiscale models. In particular, the authors prove that the MLE for the drift of the coarse-grained equation evaluated at multiscale data is asymptotically unbiased in the averaging case whereas for the homogenised equation it is biased unless the sampling is again performed at an $\epsilon$-dependent rate of $\delta=\epsilon^\alpha$ for $\alpha\in(0,1)$. This is not at all unexpected as the appearance of higher frequency terms leads to the multiscale dynamics dominating in the finer scales. In \cite{tessyzhang}, by focusing on the particular case of a multiscale system of Ornstein-Uhlenbeck processes, they show that the MLEs for the drift and diffusion coefficients will be asymptotically unbiased for $\alpha\in(0,\frac{1}{2})$ and $\alpha\in(0,1)$ respectively, with optimal sampling rate achieved at $\alpha=\frac{1}{4}$ for the drift and $\alpha=\frac{2}{3}$ for the diffusion coefficient. They also show that in the averaging case, the MLEs for both the drift and diffusion coefficients are unbiased.

Recently, there has been significant progress in understanding averaging and homogenisation limits for multiscale systems driven by fractional Brownian motion, which naturally led to considering statistical inference problems in this context. In \cite{discretetimeinf}, the authors study statistical inference questions for the averaged limit of multiscale systems that are perturbed by a fractional Brownian motion as described in \cite{perturbedfbm}. The parameters of interest in \cite{discretetimeinf} describe the behavior of a deterministic averaged system. In this paper, we are instead concerned with inference on models that exhibit a behavior in the high-frequency regime resembling that of a homogenized equation. Given the random nature of the latter, as opposed to the deterministic of the former, the estimation techniques will differ. However, we are able to adapt some of the results in Section 3 in \cite{discretetimeinf} to estimate $H$ when the two time scales are sufficiently separated.  

More precisely, we consider as fast components a fractional Ornstein-Uhlenbeck process for which a 'rough homogenisation' limit exists and is a scaled fractional Brownian motion \cite{ximeli}. This corresponds to a fractional kinetic Brownian motion model which is also found in the literature as a generalization of the well-known physical Brownian motion model \cite{physicalbm, kslimits}. Similar to \cite{stupav}, we study the behavior of the MLE for the diffusion parameter $\sigma$ of the limiting equation when evaluated on multiscale data. We assume access to continuous paths $X^{\epsilon}_{t}(\omega)$ of the multiscale system \eqref{eq: kfbm} and we consider the behavior of estimators given data sampled at any constant rate $\delta >0$. As before, we find that for the estimator to be unbiased, we need to sample $X^{\epsilon}$ at an $\epsilon$-dependent rate $\delta$ in between the two time-scales of the system \eqref{eq: kfbm}: $\epsilon<\delta<1$. In particular, we show that the MLE for $\sigma$ vanishes to $0$ almost surely if no subsampling is performed. On the other hand, we show that if subsampling is performed at an appropriate rate $\delta = \epsilon^\alpha$ convergence to $\sigma$ for $\alpha$ in an appropriate interval is ensured in the $L^2$-norm. We will assume that the observational time horizon $T$ is arbitrary but fixed to a finite quantity. 

This model requires the development of a new methodology to address the problem. Existing results in the literature on parameter estimation for multiscale systems heavily rely on the Markovianity of the processes involved. Here, the fast and slow processes in the multiscale equations, as well as the effective limit, depart from the Markovian regime. Thus, instead of using the generators of the processes to draw convergence results of the estimators, we derive convergence rates for the covariance matrix of the increments in the slow processes with respect to the spectral norm and the size of the discretisation, to measure the amplification of the error picked up when using multiscale data.
\section{Preliminaries}
\begin{definition}
A \textbf{fractional Brownian motion} (fBM) $(B^{H}_t)_{t\geq 0}$ of Hurst index $H\in(0,1)$ is a continuous and centred Gaussian process with covariance function 
\begin{equation}
\label{covariance}
\mathbb{E}\left[B^{H}_tB^{H}_s\right] = \frac{1}{2}(t^{2H}+s^{2H}-|t-s|^{2H})
\end{equation}
\end{definition}
\begin{definition}
A \textbf{fractional Ornstein-Uhlenbeck} process is defined as the unique stationary solution to the Langevin equation
\begin{equation}
\label{fOUeq}
dY_t =-\lambda Y_t dt + \sigma dB^H_t, \quad Y_0=\sigma\int_{-\infty}^0e^{\lambda s}dB_s^H
\end{equation}
which admits the closed formula 
$$
Y_t =e^{-\lambda t}\left(\sigma\int_{-\infty}^te^{\lambda s}dB^H_s\right)
$$
\end{definition}
\begin{definition}
We define the \textbf{kinetic fractional Brownian Motion}, $(X^{\epsilon}_t)_{t\geq 0}$, for some fixed $\epsilon >0$, to be the solution of the system 
\begin{equation}
\label{eq: kfbm}
\left\{
    \begin{array}{lr}
        dX_t^{\epsilon} = \sigma_1\epsilon^{H-1}Y_t^{\epsilon}dt, & X_0^{\epsilon} = x_0\\
        dY_t^{\epsilon} = -\frac{1}{\epsilon}Y_t^{\epsilon}dt + \frac{\sigma_2}{\epsilon^{H}}dB_t^H, & Y^{\epsilon}_0=\epsilon^{-H}\sigma_2\int_{-\infty}^{0}e^{\frac{t-s}{\epsilon}}dB^{H}_{s}
    \end{array}
\right. 
\end{equation}
\end{definition}
\begin{proposition}[\cite{ximeli}]
Let $X^{\epsilon}_t$ be the solution of \eqref{eq: kfbm}. For any $H\in(0,1)$, $p>1$ and $T>0$ it holds that 
$$\sup_{s,t\in[0,T]}\left|\left|X_{s,t}^{\epsilon} - \overline{\sigma} B^H_{s,t}  \right|\right|_{L^p}\lesssim \epsilon^{H}$$
where $\lesssim$ denotes less or equal up to a positive constant and $\overline{\sigma}$ is the homogenized diffusion coefficient $\overline{\sigma} = \sigma_1\cdot\sigma_2$. 
\end{proposition}

\subsection{Estimators for $\overline{\sigma}$}
We want to derive appropriate estimators for $\overline{\sigma}$ in \eqref{eq: kfbm} based on discretized samples $\{X_{i\delta}^{\epsilon}\}_{i=0}^{N}$ from the process $X^{\epsilon}_t$ defined as the solution to \eqref{eq: kfbm} when this is observed over fixed time horizon $T$ with grid size $\delta=T/N >0$ chosen in advance. 

Since $X_t^{\epsilon}\stackrel{L_p}{\longrightarrow}X_t = \overline{\sigma} B_t^H$, we use the properties of fBM to derive a desired estimator. The fact that it is Gaussian and has stationary increments \cite{fbmbook} allows us to derive an estimator for the diffusion coefficient.

We will use the notation $\Delta_{\delta}Z = \left(Z_{(i+1)\delta}-Z_{i\delta}\right)_{i=1}^N$ for the vector of increments of any process $Z_t$. The covariance between any pair $\left(\Delta_{\delta}B^{H}\right)_{i}$ and $\left(\Delta_{\delta}B^{H}\right)_{j}$ is 
$$P_{ij} = \frac{1}{2}\delta^{2H}\left[(|i-j|+1)^{2H}+(|i-j|-1)^{2H}-2|i-j|^{2H}\right].$$
Consequently, $\Delta_{\delta}X$ follows a normal distribution with $0$ mean and $\sigma^2 P$ variance and it follows by a straightforward calculation that the maximum likelihood estimator for $\sigma^2$ given a realization of $\Delta_{\delta}X$ is 
\begin{equation*}
\hat{\sigma}^{2} = \frac{1}{N}\left(\Delta_{\delta}X\right)^{T}P^{-1}\Delta_{\delta}X
\end{equation*}
where $T$, as a superscript of a vector or matrix denotes transpose. Replacing $X$ by $X^\epsilon$ leads to the following estimtor:
\begin{equation}
\label{MLE_eps}
\hat{\sigma}^{2}_{\delta,\epsilon} = \frac{1}{N}\left(\Delta_{\delta}X^{\epsilon}\right)^{T}P^{-1}\Delta_{\delta}X^{\epsilon}
\end{equation}
The remainder of the paper focuses on studying the behavior of this estimator.

\section{Main results}
In this section we present the main results of our work pertaining to the convergence of the estimator $\hat{\sigma}^{2}_{\delta,\epsilon}$ alongside some newly developed tools necessary for the proofs.

\subsection{Asymptotic biasedness without subsampling}
First, we show that if we sample too finely (in a sense made precise below), the estimator will be biased.
\begin{theorem}
\label{noconvergence}
Let $X^{\epsilon}$ be the solution of \eqref{eq: kfbm} and fix $T=N\delta$ for any time horizon $T$ and $0<H<1$. For any fixed $\epsilon>0$ it holds that 
\begin{equation}
\label{unbiasedno}
\lim_{\delta\rightarrow 0}\hat{\sigma}^{2}_{\delta, \epsilon} = 0 \quad \text{a.s.},
\end{equation}
where $\sigma_{\delta,\epsilon}$ is defined in \eqref{MLE_eps}. Moreover, if $\delta(\epsilon)>0$ is such that $\delta = \epsilon^{\alpha}$ for some $\alpha>0$ with $\alpha>\max\{1,2(1-H)\}$ then
\begin{equation}
\label{unbiasedia}
\mathbb{E}[\hat{\sigma}^{2}_{\delta,\epsilon}]\xrightarrow{\epsilon\rightarrow 0^+}0
\end{equation}
\end{theorem}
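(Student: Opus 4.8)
The plan is to reduce both statements to two ingredients: an upper bound on the spectral norm $\|P^{-1}\|_2$, and the scaling in $\delta$ and $\epsilon$ of the increments of $X^{\epsilon}$. I would first record the exact factorisation $P = \delta^{2H} G$, where $G$ is the covariance matrix of the unit-spacing fractional Gaussian noise, so that $\|P^{-1}\|_2 = \delta^{-2H}\|G^{-1}\|_2$. The single analytic input I need is the asymptotic estimate $\|P^{-1}\|_2 \lesssim \delta^{-\max\{1,2H\}}$ as $N\to\infty$ (equivalently a matching lower bound on $\lambda_{\min}(P)$), which is precisely the bound on the inverse covariance of fractional Gaussian noise developed in Section~\ref{section: proof of main lemma}. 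Everything else is Cauchy--Schwarz and bookkeeping.

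For the almost sure statement \eqref{unbiasedno} with $\epsilon$ fixed, I would bound the quadratic form from above by $\hat{\sigma}^{2}_{\delta,\epsilon} \le \tfrac{1}{N}\|P^{-1}\|_2\,\|\Delta_{\delta}X^{\epsilon}\|_2^2$. Since $X^{\epsilon}_t = \epsilon^{H-1}\int_0^t Y^{\epsilon}_s\,ds$ is $C^1$ for fixed $\epsilon$, Cauchy--Schwarz on each increment gives $(X^{\epsilon}_{(i+1)\delta}-X^{\epsilon}_{i\delta})^2 \le \epsilon^{2H-2}\delta\int_{i\delta}^{(i+1)\delta}(Y^{\epsilon}_s)^2\,ds$, hence $\|\Delta_{\delta}X^{\epsilon}\|_2^2 \le \epsilon^{2H-2}\delta\int_0^T (Y^{\epsilon}_s)^2\,ds$, where the integral is almost surely finite because $Y^{\epsilon}$ has continuous paths on $[0,T]$. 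Using $N = T/\delta$ this yields $\hat{\sigma}^{2}_{\delta,\epsilon}\le C(\omega,\epsilon)\,\delta^2\,\|P^{-1}\|_2$, and feeding in the spectral bound produces $\hat{\sigma}^{2}_{\delta,\epsilon}\lesssim C(\omega,\epsilon)\,\delta^{\,2-\max\{1,2H\}} = C(\omega,\epsilon)\,\delta^{\min\{1,\,2-2H\}}$, whose exponent is strictly positive, giving the claim.

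For the expectation statement \eqref{unbiasedia} I would instead pass to the trace, writing $\mathbb{E}[\hat{\sigma}^{2}_{\delta,\epsilon}] = \tfrac{1}{N}\mathrm{tr}(P^{-1}Q)$ with $Q = \mathrm{Cov}(\Delta_{\delta}X^{\epsilon})$, and use $\mathrm{tr}(P^{-1}Q)\le \|P^{-1}\|_2\,\mathrm{tr}(Q)$, valid since $Q$ is positive semidefinite. The same Cauchy--Schwarz estimate in expectation, together with the stationarity of $Y^{\epsilon}$ and the scaling $Y^{\epsilon}_t \stackrel{d}{=} Y^{1}_{t/\epsilon}$ (which makes $\mathrm{Var}(Y^{\epsilon}_0)$ independent of $\epsilon$), gives $\mathrm{tr}(Q)\le N\,\epsilon^{2H-2}\delta^2\,\mathrm{Var}(Y^{\epsilon}_0)$, so that $\mathbb{E}[\hat{\sigma}^{2}_{\delta,\epsilon}]\lesssim \epsilon^{2H-2}\delta^2\|P^{-1}\|_2$. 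Substituting the spectral bound and $\delta=\epsilon^{\alpha}$, the branch $H>1/2$ gives order $\epsilon^{\,2(1-H)(\alpha-1)}$ and the branch $H<1/2$ gives order $\epsilon^{\,\alpha-2(1-H)}$; both vanish as $\epsilon\to 0^+$ precisely when $\alpha>\max\{1,2(1-H)\}$, which is exactly the stated threshold.

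The main obstacle is the spectral-norm bound $\|P^{-1}\|_2\lesssim \delta^{-\max\{1,2H\}}$; once it is available, the rest follows by the elementary inequalities above. The delicate regime is $H<1/2$, where the spectral density of fractional Gaussian noise vanishes at the origin and $\lambda_{\min}(G)$ decays like $N^{-(1-2H)}$, so a crude uniform lower bound on the eigenvalues is unavailable and a careful Toeplitz/spectral-density argument is required — this is what I expect Section~\ref{section: proof of main lemma} to supply.
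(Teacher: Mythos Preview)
Your proposal is correct and follows essentially the same route as the paper: bound the quadratic form by $\|P^{-1}\|_2$ times the Euclidean norm of the increments, control the latter via the $C^1$ representation $X^{\epsilon}_t=\epsilon^{H-1}\int_0^t Y^{\epsilon}_s\,ds$, and feed in Lemma~\ref{orderlemma}. The only cosmetic differences are that the paper bounds each increment by $\epsilon^{2H-2}\delta^2(\max_{[0,T]}Y^{\epsilon})^2$ rather than your Cauchy--Schwarz bound $\epsilon^{2H-2}\delta\int(Y^{\epsilon}_s)^2\,ds$, and for \eqref{unbiasedia} the paper takes expectations directly in the pointwise inequality rather than passing through $\mathrm{tr}(P^{-1}Q)$; both lead to the identical estimate $\mathbb{E}[\hat{\sigma}^{2}_{\delta,\epsilon}]\lesssim \epsilon^{2H-2}\delta^{2-\max\{1,2H\}}$ and the same thresholds in $\alpha$. (As an aside, your guess that the $H<1/2$ case of Lemma~\ref{orderlemma} is handled by a Toeplitz/spectral-density argument is off: the paper instead identifies $\|\cdot\|_{\mathcal H}$ with a fractional Sobolev seminorm and invokes a Sobolev embedding, but this has no bearing on your proof of the theorem itself.)
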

In essence, \eqref{unbiasedno} highlights the need of some subsampling as taking $\delta$ arbitrarily small  makes the estimator vanish almost surely. Moreover, \eqref{unbiasedia} means that the estimator will not be consistent for $\alpha >1$ in the case $H\geq 1/2$ or $\alpha>2(1-H)$ when $H<1/2$. 

\subsection{Consistency with appropriate subsampling}
The next result gives the range of $\alpha$ for which we can guarantee that the estimator $\hat{\sigma}_{\delta,\epsilon}$ will be consistent, in a sense that is made precise below.
\begin{theorem}
\label{maintheorem}
Let $\hat{\sigma}_{\delta,\epsilon}^2$ be the estimator defined in \eqref{MLE_eps} constructed from the solution of \eqref{eq: kfbm}, $X^{\epsilon}_t$, observed at sampling rate $\delta = \epsilon^{\alpha}$ where $\epsilon>0$ is the scale separation parameter in \eqref{eq: kfbm}. For any $0<H<1$ and $0<\alpha<\min\{1, \frac{H}{1-H}\}$ it holds that $$\hat{\sigma}_{\delta,\epsilon}^2\xrightarrow{\epsilon\rightarrow 0^+}\overline{\sigma}^2\quad\text{in }L^2$$ 
\end{theorem}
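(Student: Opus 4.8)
The plan is to obtain the $L^2$ statement by bounding the mean--squared error $\mathbb{E}\big[(\hat\sigma^2_{\delta,\epsilon}-\sigma^2)^2\big]$. The natural first move is to separate the limiting signal from the multiscale error: writing $R^{\epsilon}=X^{\epsilon}-\sigma B^{H}$ we have $\Delta_\delta X^{\epsilon}=\sigma\,\Delta_\delta B^{H}+\Delta_\delta R^{\epsilon}$, and expanding the quadratic form \eqref{MLE_eps} gives
\begin{align*}
\hat\sigma^2_{\delta,\epsilon}
&=\underbrace{\frac{\sigma^2}{N}\big(\Delta_\delta B^{H}\big)^{T}P^{-1}\Delta_\delta B^{H}}_{=:A}\\
&\quad+\underbrace{\frac{2\sigma}{N}\big(\Delta_\delta B^{H}\big)^{T}P^{-1}\Delta_\delta R^{\epsilon}}_{=:B}\\
&\quad+\underbrace{\frac{1}{N}\big(\Delta_\delta R^{\epsilon}\big)^{T}P^{-1}\Delta_\delta R^{\epsilon}}_{=:C}.
\end{align*}
Because $X^{\epsilon}$ and $B^{H}$ are jointly Gaussian, all three pieces are (bi)linear or quadratic forms in Gaussian vectors, so by the triangle inequality in $L^2(\Omega)$ it suffices to prove $\|A-\sigma^2\|_{L^2}\to 0$, $\|B\|_{L^2}\to 0$ and $\|C\|_{L^2}\to 0$ separately as $\epsilon\to 0^+$.

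The term $A$ is treated exactly. Since $\Delta_\delta B^{H}\sim\mathcal N(0,P)$, the identities for Gaussian quadratic forms give $\mathbb{E}[A]=\frac{\sigma^2}{N}\mathrm{tr}(P^{-1}P)=\sigma^2$ and $\mathrm{Var}(A)=\frac{2\sigma^4}{N^2}\mathrm{tr}\big((P^{-1}P)^2\big)=\frac{2\sigma^4}{N}$. Thus $A$ is unbiased for every $\delta$ and $\|A-\sigma^2\|_{L^2}^2=2\sigma^4/N\to 0$; this contributes the central-limit part of the error and needs nothing beyond $N=T/\delta\to\infty$, i.e. $\alpha>0$. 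The terms $B$ and $C$ carry the genuinely multiscale error, and are where $\alpha$ enters. Using $(u^{T}P^{-1}v)^2\le\|P^{-1}\|_2^2\|u\|_2^2\|v\|_2^2$ together with Cauchy--Schwarz, I would reduce them to fourth-moment bounds: Gaussianity gives $\mathbb{E}\|\Delta_\delta B^{H}\|_2^4\lesssim(\mathrm{tr}P)^2=(N\delta^{2H})^2$, while the estimate $\sup_{s,t\in[0,T]}\|X^{\epsilon}_{s,t}-\sigma B^{H}_{s,t}\|_{L^4}\lesssim\epsilon^{H}$ of \cite{ximeli}, applied with $s=(i-1)\delta$, $t=i\delta$, yields $\mathbb{E}[(\Delta_\delta R^{\epsilon})_i^4]\lesssim\epsilon^{4H}$ and hence $\mathbb{E}\|\Delta_\delta R^{\epsilon}\|_2^4\lesssim N^2\epsilon^{4H}$. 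Combining these gives
$$\|B\|_{L^2}\lesssim \|P^{-1}\|_2\,\delta^{H}\epsilon^{H},\qquad \|C\|_{L^2}\lesssim \|P^{-1}\|_2\,\epsilon^{2H}.$$

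Everything then hinges on a sharp upper bound for $\|P^{-1}\|_2$, equivalently a strictly positive lower bound for $\lambda_{\min}(P)$, which I regard as the main obstacle and the genuinely new ingredient the paper must supply. Since $P=\delta^{2H}T_N$ with $T_N$ the $N\times N$ covariance of unit-step fractional Gaussian noise, this is the decay rate of the smallest eigenvalue of a Toeplitz matrix whose symbol is the fGn spectral density $f(\lambda)\sim|\lambda|^{1-2H}$ near the origin. For $H\ge 1/2$ the density stays bounded below, so one expects $\lambda_{\min}(T_N)\gtrsim 1$ and $\|P^{-1}\|_2\lesssim\delta^{-2H}$; for $H<1/2$ the density vanishes at $0$, the smallest eigenvalue should decay like the symbol at the coarsest frequency $\sim 1/N$, giving $\lambda_{\min}(T_N)\gtrsim N^{2H-1}$ and $\|P^{-1}\|_2\lesssim\delta^{-2H}N^{1-2H}$. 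Establishing this lower eigenvalue bound rigorously (rather than via the heuristic Szeg\H{o} picture) is the hard part of the whole argument.

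Granting the bound, the proof concludes by substituting $\delta=\epsilon^{\alpha}$, $N=T\epsilon^{-\alpha}$ and tracking exponents, with the cross term $B$ as the binding constraint. For $H\ge 1/2$ one finds $\|B\|_{L^2}\lesssim\epsilon^{H(1-\alpha)}$, which vanishes precisely when $\alpha<1$, while $\|C\|_{L^2}\lesssim\epsilon^{2H(1-\alpha)}$ is strictly faster. For $H<1/2$ one finds $\|B\|_{L^2}\lesssim\epsilon^{\,H-\alpha(1-H)}$, vanishing precisely when $\alpha<\frac{H}{1-H}$, whereas $\|C\|_{L^2}\lesssim\epsilon^{\,2H-\alpha}$ only needs the strictly weaker $\alpha<2H$. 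Taking the two cases together reproduces exactly the stated range $0<\alpha<\min\{1,\frac{H}{1-H}\}$, under which all three $L^2$ norms vanish and the desired $L^2$ convergence follows.
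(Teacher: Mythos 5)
Your proof of the theorem itself is correct and follows essentially the same route as the paper: the same three--term decomposition of $\hat\sigma^2_{\delta,\epsilon}$, the same exact treatment of the leading term as a $\chi^2_N$ quadratic form contributing $\sqrt{2/N}\,\sigma^2$, the same Cauchy--Schwarz reduction of the cross and remainder terms to fourth moments, and the same final exponents $\epsilon^{H(1-\alpha)}$ (for $H\ge 1/2$) and $\epsilon^{H-\alpha(1-H)}$ (for $H<1/2$) with the cross term binding. Two remarks on where you diverge. First, a cosmetic difference: you bound $\Delta_\delta R^\epsilon$ via the $L^4$ estimate $\sup_{s,t}\|X^\epsilon_{s,t}-\sigma B^H_{s,t}\|_{L^4}\lesssim\epsilon^H$, whereas the paper uses the exact pathwise identity $\Delta_\delta X^\epsilon-\sigma\Delta_\delta B^H=\epsilon^H\Delta_\delta Y^\epsilon$ together with stationarity and Gaussianity of $Y^\epsilon$; both yield $\mathbb{E}\|\Delta_\delta R^\epsilon\|_2^4\lesssim N^2\epsilon^{4H}$, so nothing is lost. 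Second, and more substantively: you correctly identify the bound on $\|P^{-1}\|_2$ (equivalently $\lambda_{\min}(P)$) as the crux, and you state the right orders ($\delta^{-2H}$ for $H\ge 1/2$, $\delta^{-1}$ for $H<1/2$, matching Lemma \ref{orderlemma}), but you justify them only by the Szeg\H{o}/Toeplitz-symbol heuristic for the fGn spectral density $f(\lambda)\sim|\lambda|^{1-2H}$. For $H>1/2$ that heuristic is in fact a rigorous classical statement ($\lambda_{\min}\ge\operatorname{ess\,inf}f>0$), but for $H<1/2$, where the symbol vanishes at the origin, the claimed rate $\lambda_{\min}(T_N)\gtrsim N^{2H-1}$ is genuinely nontrivial and is left unproven in your write-up. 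The paper proves the lemma by entirely different means: for $H>1/2$ by representing the $\mathcal{H}$-norm through fractional integrals and bounding the Marchaud derivative of step functions, and for $H<1/2$ by identifying $\|f\|_{\mathcal{H}}$ with a fractional Sobolev seminorm and invoking the embedding $\mathbb{W}^{1/2-H,2}\hookrightarrow L^2$. Your Toeplitz route is a plausible alternative (results of Parter, Widom and Serra-Capizzano on extreme eigenvalues of Toeplitz matrices with vanishing symbols could likely close it), but as written the $H<1/2$ case of the key lemma remains a gap that the rest of your argument silently relies on.
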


Note that in the case $H\geq 1/2$, the estimator converges if and only if $0<\alpha<1$, with Theorem \ref{noconvergence} ensuring that the threshold $\alpha = 1$ is sharp. However, in the case $H<1/2$, we can only show consistency for $0<\alpha<\frac{H}{1-H}$ and biasedness for $\alpha>2(1-H)$, with numerical examples suggesting that $\alpha=1$ might be the critical value here as well (see Figure \ref{fig: simulation of error}). 
\begin{figure}
    \centering
    \includegraphics[scale=0.367]{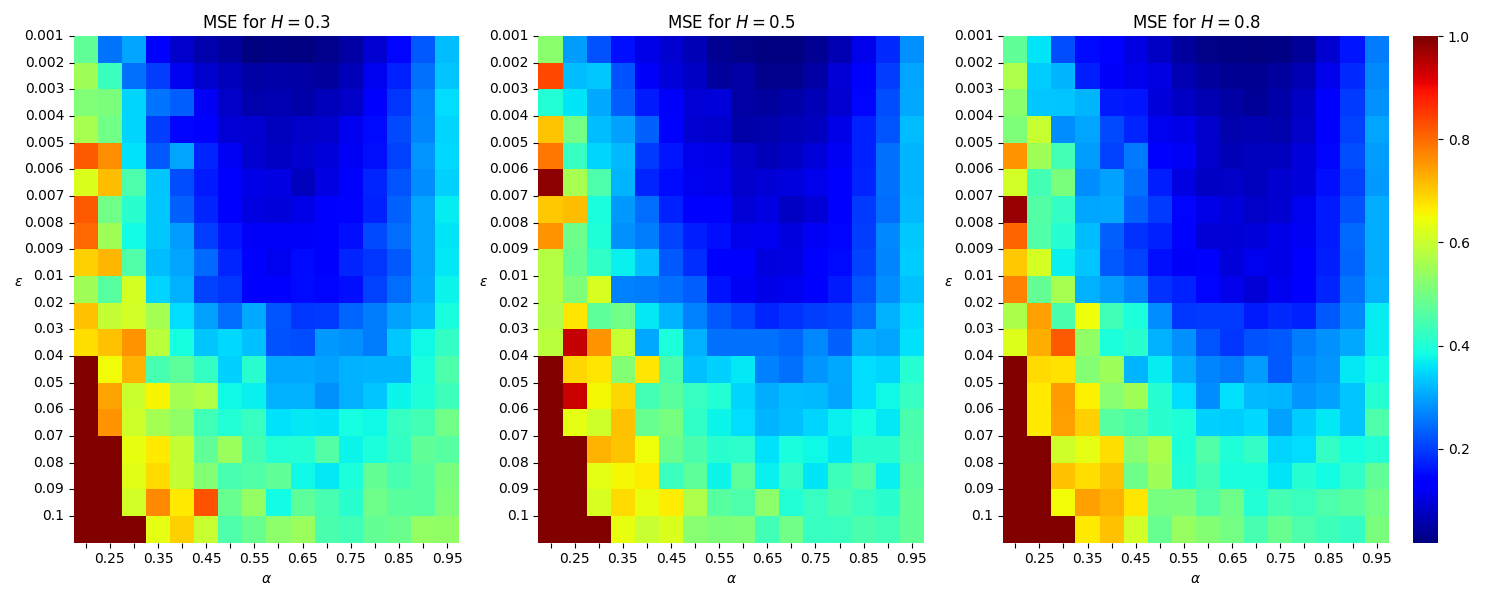}
    \caption{Numerical simulation of the Mean Squared Error in the estimation for different values of $\epsilon$ and $\alpha$ when $\sigma = T = 1$.}
    \label{fig: simulation of error}
\end{figure}                                                                                         

\subsection{Auxiliary Tools: asymptotic bounds on the spectral norm of $P^{-1}$.}
A key step in our approach is controlling the spectral norm of the inverse covariance matrix of fractional Gaussian Noise  (fGN) on a equi-spaced grid. While this is a trivial problem in the case of Gaussian noise arising from a standard Brownian Motion as covariance matrices are diagonal, it gets quite complicated when $H\neq 1/2$. The problem of bounding the eigenvalues has been studied for covariance operators associated with both fBM and fGN in \cite{chigansky18}, where explicit asymptotic results were derived for the eigenvalues of the operators both for $H<1/2$ and $H>1/2$. However, for the discretized processes, there is not, to the best of our knowledge, anything similar available in the literature.

\begin{lemma}
\label{orderlemma}
Let $P$ be the covariance matrix of a discretized fGN process on an equi-spaced grid of $n$ points over an interval $[0,T]$. It then holds that
$$||P^{-1}||_2 \leq Cn^{\beta}$$
where $\beta = \max\{1,2H\}$.
\end{lemma}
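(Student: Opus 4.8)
The plan is to factor out the mesh-dependence and reduce to a lower bound on the smallest eigenvalue of a scale-free Toeplitz matrix, then treat the regimes $H<1/2$ and $H>1/2$ by different means. Writing $\delta = T/n$ for the mesh, every entry of $P$ carries a common factor $\delta^{2H}$, so $P = \delta^{2H}\tilde{P}$, where $\tilde{P}$ is the Toeplitz covariance matrix of unit-spaced fractional Gaussian noise, with entries $\tilde{P}_{ij}=r(|i-j|)$, $r(0)=1$ and $r(m)=\tfrac12[(m+1)^{2H}+(m-1)^{2H}-2m^{2H}]$ for $m\ge1$. Since $\tilde{P}$ is symmetric positive definite, $\|P^{-1}\|_2 = \delta^{-2H}/\lambda_{\min}(\tilde{P}) = (n/T)^{2H}/\lambda_{\min}(\tilde{P})$. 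Hence it suffices to prove $\lambda_{\min}(\tilde{P})\ge c>0$ when $H>1/2$ (giving $\|P^{-1}\|_2\lesssim n^{2H}$) and $\lambda_{\min}(\tilde{P})\ge c\,n^{-(1-2H)}$ when $H<1/2$ (giving $\|P^{-1}\|_2\lesssim n$); the case $H=1/2$ is immediate since then $\tilde{P}=I$.

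For $H<1/2$ I would argue by diagonal dominance. The two structural facts I would use are that $r(m)<0$ for every $m\ge1$ (it is the second difference of the concave map $m\mapsto m^{2H}$) and that the absolute row sums telescope: $\sum_{k=1}^m|r(k)| = -\sum_{k=1}^m r(k)=\tfrac12\big[1-\big((m+1)^{2H}-m^{2H}\big)\big]$, which is strictly below $\tfrac12$. Thus $\tilde{P}$ is strictly diagonally dominant and Gershgorin's theorem gives $\lambda_{\min}(\tilde{P})\ge r(0)-\max_i R_i$, with $R_i=\sum_{j\ne i}|r(|i-j|)|$ the off-diagonal absolute sum of row $i$. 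Writing $g(m)=(m+1)^{2H}-m^{2H}$, one has $1-R_i=\tfrac12[g(i-1)+g(n-i)]$; since $g$ is convex and decreasing while the arguments sum to $n-1$, the minimum over $i$ is attained at the central row, so $\lambda_{\min}(\tilde{P})\ge g\big(\tfrac{n-1}{2}\big)\sim 2H\big(\tfrac{n-1}{2}\big)^{2H-1}\asymp n^{-(1-2H)}$, exactly the required order.

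For $H>1/2$ diagonal dominance fails ($r(m)>0$ and $\sum_m r(m)=\infty$), so I would instead invoke the harmonizable spectral representation of fractional Brownian motion, which lets me write any quadratic form as $a^{T}\tilde{P}a = c_H\int_{\mathbb{R}}|p_a(\xi)|^2\,\frac{|e^{i\xi}-1|^2}{|\xi|^{2H+1}}\,d\xi$, where $p_a(\xi)=\sum_k a_k e^{ik\xi}$ and $\tfrac1{2\pi}\int_{-\pi}^{\pi}|p_a|^2=\|a\|^2$. Restricting the (nonnegative) integrand to a single period and bounding the symbol $h(\xi)=4\sin^2(\xi/2)/|\xi|^{2H+1}$ below, one checks that $h$ is continuous and strictly positive on $[-\pi,\pi]\setminus\{0\}$ and blows up at the origin (since $h(\xi)\sim|\xi|^{1-2H}$ with $1-2H<0$), whence $m:=\inf_{[-\pi,\pi]}h>0$. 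This yields $a^{T}\tilde{P}a\ge c_H m\int_{-\pi}^{\pi}|p_a|^2=2\pi c_H m\,\|a\|^2$, i.e. $\lambda_{\min}(\tilde{P})\ge c>0$, as needed.

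The main obstacle is the $H<1/2$ regime, where $\tilde{P}$ genuinely degenerates as $n\to\infty$ because the underlying spectral density vanishes at the origin; a crude uniform positive lower bound on the density, which carries the $H>1/2$ case, is hopeless here. The point I would have to get right is that Gershgorin is not lossy in this case: the sign pattern together with the exact telescoping of $\sum|r(k)|$ forces the diagonal-dominance deficit of the central row to decay at precisely the rate $n^{-(1-2H)}$ of $\lambda_{\min}(\tilde{P})$ itself, and a matching upper bound by testing against a Fej\'er kernel confirms sharpness. Locating the extremal row via convexity of $g$ and controlling the constants in $g(m)\sim 2Hm^{2H-1}$ are then the only quantitative steps that remain.
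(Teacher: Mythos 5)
Your proposal is correct, but it takes a genuinely different route from the paper's. The paper identifies the quadratic form $u^TPu$ with $\|f\|_{\mathcal{H}}^2$ for the step function $f=\sum_i u_i\mathbf{1}_{(i\delta,(i+1)\delta]}$, where $\mathcal{H}$ is the domain of the Wiener integral with respect to $B^H$; for $H>1/2$ it proves the interpolation-type inequality $\|f\|_{L^2}^2\leq\sqrt{c_H}\,\|f\|_{\mathcal{H}}\,\|\mathcal{D}_{-}^{H-1/2}f\|_{L^2}$ via fractional calculus and then spends an appendix bounding the Marchaud derivative of the step function, while for $H<1/2$ it uses the Gagliardo-seminorm representation of $\|\cdot\|_{\mathcal{H}}$ together with a fractional Sobolev embedding to get $\|f\|_{\mathcal{H}}^2\geq C\|f\|_{L^2}^2$. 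You instead work entirely at the level of the Toeplitz matrix: after pulling out the self-similarity factor $\delta^{2H}$ (which the paper also does, via $\tilde{P}^{(n)}=T^{2H}P^{(n)}$), you lower-bound $\lambda_{\min}$ of the unit-spaced fGn covariance by the infimum of its unfolded spectral symbol $4\sin^2(\xi/2)/|\xi|^{2H+1}$ on $[-\pi,\pi]$ when $H>1/2$ — a Grenander--Szeg\H{o}-type bound that works precisely because $1-2H<0$ keeps the symbol away from zero — and by Gershgorin when $H<1/2$, where the sign pattern $r(m)<0$ and the exact telescoping $\sum_{k=1}^{m}|r(k)|=\tfrac12\bigl[1-g(m)\bigr]$ with $g(m)=(m+1)^{2H}-m^{2H}$ give $1-R_i=\tfrac12\bigl[g(i-1)+g(n-i)\bigr]$, and convexity of $g$ locates the extremal central row, yielding $\lambda_{\min}(\tilde{P})\gtrsim n^{2H-1}$ and hence $\|P^{-1}\|_2\lesssim n$. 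Both regimes check out, and your argument is shorter and more elementary: it avoids the fractional-calculus machinery and the appendix computations, produces explicit constants, and in the $H<1/2$ case even identifies where the degeneration of $\lambda_{\min}$ comes from. What the paper's route buys is coherence with the Wiener-integral framework used elsewhere in the paper and a statement ($\|f\|_{\mathcal{H}}\geq c\|f\|_{L^2}$ for step functions) that is not tied to the equispaced Toeplitz structure; the sharpness discussion via a Fej\'er-kernel test function in your last paragraph is not needed for the lemma itself.
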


Note that there is a qualitative difference in the behavior of the spectral norm in lemma \ref{orderlemma}, for $H>1/2$ and $H<1/2$. When $H>1/2$ the spectral norm of the matrix is asymptotically bounded by $n^{2H}$. However, when $H<1/2$ it stops decreasing with $H$ and does not get any better than $n$.
\section{Proof of Lemma \ref{orderlemma}}
\label{section: proof of main lemma}
Throughout this section we will assume that $T=1$ without any loss of generality due to the self-similarity property of fBM \cite{fbmbook}. We also add the superscript $(n)$ to the covariance matrix to make explicit the dependence of $||P^{-1}||_2$ on the size of the grid. 

It is a standard linear algebra result that the spectral norm of the inverse covariance matrix is bounded by the reciprocal of the smallest eigenvalue. This allows to reformulate the problem as finding a lower bound for the quadratic form associated to $P^{(n)}$ of the form 
\begin{equation*}
u^T P^{(n)} u \geq C \sum_{i=1}^n u_i^2
\end{equation*}
for any eigenvector $u\in\mathbb{R}^{n}$ associated to $P^{(n)}$ as $C^{-1}$ would automatically be an upper bound for the desired quantity.

Finally, we note that the quadratic form associated with the covariance matrix $P^{(n)}$ of fGN can also be expressed as the second moment of a Wiener integral with respect to the original fBM as follows:
\begin{align}
\label{eq: qudratic form as expectation}
\nonumber u^T P^{(n)} u &= \sum_{i,j = 0}^{n-1}u_iu_j P^{(n)}_{i,j}
 = \sum_{i,j = 0}^{n-1}u_iu_j \mathbb{E}[(B^{H}_{(i+1)\delta} - B^{H}_{i\delta})(B^{H}_{(j+1)\delta} - B^{H}_{j\delta})]\\
\nonumber &= \mathbb{E}\left[\left(\sum_{i=0}^{n-1}u_i(B^{H}_{(i+1)\delta} - B^{H}_{i\delta})\right)^{2}\right]
=\mathbb{E}\left[\left(\int_0^1f(t)dB^{H}_t \right)^2\right]
\end{align}
where $f$ is the simple function $\sum_{i=0}^{n-1}u_i\mathbf{1}_{[i\delta, (i+1)\delta)}$.

\subsection{The Wiener Integral with respect to fractional Brownian Motion}
Before proving lemma \ref{orderlemma}, we first define the space $\mathcal{H}$ of deterministic integrands of a fBM, following \cite{MalliavinNualart}. Let $\mathcal{E}$ be the space of $\mathbb{R}$-valued step functions and denote by $\mathcal{H}$ the closure of $\mathcal{E}$ for the scalar product 
$$\langle\mathbf{1}_{[0,t)},\mathbf{1}_{[0,s)}\rangle_{\mathcal{H}} = \mathbb{E}[B_t^H B_s^H]$$
which is an equivalent definition to the natural construction of the Wiener integrands as the span of the increments of $B^H_t$ (see, for example, \cite{jolis}). This is to say that, for $\phi,\ \psi\in\mathcal{H}$, the inner product admits the representation 
$$\langle \phi,\psi\rangle_{\mathcal{H}} = \mathbb{E}\left[\int_{\mathbb{R}}\phi(t)dB^H_t \cdot\int_{\mathbb{R}}\psi(t)dB^H_t \right]$$

The proof of lemma \ref{orderlemma}, although based on the same idea of studying the integrands of the Wiener integral with respect to a fBM, is substantially different depending on whether $H$ is smaller or bigger than $1/2$. Therefore, we will carry an independent analysis for each case. 

Note that the functions we will be interested in are right-continuous step functions defined in
 $[0,1]$ which we will denote as $\mathcal{E}_{|[0,1]}$ and we will consider them extended to the real line by taking $f(x)=0$ when $x\notin [0,1]$ when needed so that $\mathcal{E}_{|[0,1]}\subset \mathcal{E}$. 

Before proceeding with each case note that in principle none of them would apply to the case $H=1/2$. However, in this case the results is trivial as $P^{(n)} = \delta I_{n}$ and therefore $||(P^{(n)})^{-1}||_2 = n$. 

\subsection{Case $H>1/2$.}
We begin by surveying some concepts from fractional calculus alongside some auxiliary results as these are very closely related to the study of Wiener integrals against fBM \cite{taqqu}. 

\begin{definition}
The fractional integrals of order $\gamma$ are defined as the operator 
$$\mathcal{I}^{\gamma}_{\pm}h(t) = \frac{1}{\Gamma(\gamma)}\int_{\mathbb{R}}(t-s)_{\pm}^{\gamma -1}h(s)ds$$
whereas the Marchaud fractional derivatives of order $\gamma$ are defined as
\begin{equation}
\label{eq: marchaud fractional derivative}
\mathcal{D}^{\gamma}_{\pm}h(t) = \frac{\gamma}{\Gamma(1-\gamma)}\int^{\infty}_{0}\frac{h(t)-h(t\mp s)}{s^{\gamma + 1}}ds.
\end{equation}
\end{definition}  
\begin{remark}
We would like to stress that Marchaud fractional derivatives are typically found in the literature defined conveniently as a limit truncating the integral inside \eqref{eq: marchaud fractional derivative} to include the case in which it converges only conditionally. For our purposes \eqref{eq: marchaud fractional derivative} is good enough.
\end{remark}
Under various assumptions, $\mathcal{D}_{\pm}^{\gamma}$ is the inverse operator of $\mathcal{I}^{\gamma}_{\pm}$ in $L^{2}(\mathbb{R}^{+})$ whereas both the fractional integral and derivatives with different signs are adjoint to each other --see \cite{fractionalcalculus}. 

When $H>1/2$ it is possible to find a space of functions $\Lambda^{H}$ such that $\mathcal{E}\subset\Lambda^{H}\subset \mathcal{H}$ satisfying
\begin{equation}
\label{eq:H-norm and L2-norm equivalence}
||f||_{\mathcal{H}} = c_H||\mathcal{I}^{H-1/2}f||_{L^2(\mathbb{R}^+)}
\end{equation}
for any $f\in\Lambda^{H}$ where $c_H$ is a positive constant depending only on $H$ --see \cite{taqqu}. 

\begin{proposition}
\label{prop: norm bound}
Let $H>1/2$, $f\in \mathcal{E}_{|[0,1]}$. It holds that:
\begin{equation}
\label{eq: norm bound}
||f||_{L^2(\mathbb{R})}^2\leq c_H||f||_{\mathcal{H}}||\mathcal{D}^{H-1/2}_{-}f||_{L^2([0,1])}.
\end{equation}
\end{proposition}
\begin{proof}
A simple application of Cauchy-Schwardz's inequality yields 
\begin{align*}
|\langle f,f \rangle_{L^2(\mathbb{R})} | &= |\langle \mathcal{D}_{+}^{H-1/2}\mathcal{I}^{H-1/2}_{+}f,f \rangle_{L^2(\mathbb{R})} | = |\langle \mathcal{I}^{H-1/2}_{+}f,\mathcal{D}^{H-1/2}_{-}f \rangle_{L^2([0,1])}|\\
&\leq ||\mathcal{I}^{H-1/2}_{+}f||_{L^2(\mathbb{R}^+)}\cdot ||\mathcal{D}^{H-1/2}_{-}f||_{L^2([0,1])} = c_H||f||_{\mathcal{H}}\cdot ||\mathcal{D}^{H-1/2}_{-}f||_{L^2([0,1])}
\end{align*}
where the last equality follows from \eqref{eq:H-norm and L2-norm equivalence}. The the fact that $\mathcal{E}_{|[0,1]}\subset L^{p}(\mathbb{R})$ for any $1\leq p\leq \infty$ guarantees that the first two equalities hold. The change in the domain from $\mathbb{R}$ to $[0,1]$ in the second equality is due to the fact that $\mathcal{I}^{\gamma}_{+}f$ is supported on $(0,+\infty)$ and $\mathcal{D}^{\gamma}_{-}f$ in $(-\infty,1)$ when $f$ is supported in $[0,1)$.
\end{proof}
Using the previous result we can lower-bound the quadratic form associated to $P^{(n)}$ by taking again $f=\sum_{i=0}^{n-1}u_i\mathbf{1}_{[i\delta, (i+1)\delta)}$
\begin{equation}
\label{quadraticbound}
u^T P^{(n)}u = ||f||_{\mathcal{H}}^2 \geq \left(\frac{||f||_{L^2(\mathbb{R}^+)}^2}{\sqrt{c_H}||\mathcal{D}^{H-1/2}_{-}f||_{L^2(\mathbb{R}^+)}}\right)^2.
\end{equation}
Hence, if we can find a bound of the type 
\begin{equation}
\label{eq: another bound}
    \left(\frac{||f||_{L^2(\mathbb{R}^+)}^2}{\sqrt{c_H}||\mathcal{D}^{H-1/2}_{-}f||_{L^2(\mathbb{R}^+)}}\right)^2 \geq C(n)\sum_{i=0}^{n-1}u_i^2,
\end{equation}
it is possible to write
\begin{equation}
\label{boundinversecov}
||(P^{n})^{-1}||_{2}\leq C(n)^{-1}.
\end{equation}

We now prove that \eqref{eq: another bound} holds for an appropriate $C(n)$, to be identified. For $f=\sum_{i=0}^{n-1}u_i\mathbf{1}_{[i\delta, (i+1)\delta)}$ the following holds: 
\begin{align*}
||f||_{L^2(\mathbb{R}^+)}^2 =
\int_0^1 \left(\sum_{i=0}^{n-1}u_i\mathbf{1}_{[\delta, (i+1)\delta)}(s)\right)^2 ds= \int_0^1 \sum_{i=0}^{n-1}u_i^2\mathbf{1}_{[\delta, (i+1)\delta)}(s) ds
= \delta \sum_{i=0}^{n-1}u_i^2.
\end{align*}
The Marchaud fractional derivative of order $\gamma$ of $f$ may be written as 
\begin{align*}
\mathcal{D}_{-}^{\gamma} (f)(t)=&\frac{-\gamma}{\Gamma(1-\gamma)}
\sum_{i=0}^{n-1}\left(\sum_{j = i+1}^{n-1}(u_j - u_i)\left(((j+1)\delta - s)^{-\gamma} - (j\delta - s)^{\delta}\right)\right) \mathbf{1}_{[i\delta, (i+1)\delta)}(t)
\end{align*}
when $t>0$. It follows that
\begin{align}
&||\mathcal{D}_{-}^{\gamma}(f)(t)||_{L^2(\mathbb{R}^+)}^{2}
=\sum_{i=0}^{n-1}||\mathcal{D}_{-}^{\gamma}(f)(t)||_{L^2([i\delta, (i+1)\delta))}^{2} \nonumber \\
=& C \sum_{i=0}^{n-1}\int_{i\delta}^{(i+1)\delta}\left(\sum_{j\geq i+1}(u_j-u_i)\left(((j+1)\delta -r)^{-\gamma} - (j\delta-r)^{-\gamma}\right)\right)^2dr\nonumber \\
=&C\delta^{1-2\gamma} \sum_{i=0}^{n-1}\int_{i}^{i+1}\left(\sum_{j\geq i+1}(u_j-u_i)\left((j+1 -s)^{-\gamma} - (j-s)^{-\gamma}\right)\right)^2ds \label{line: cv}\\
=&C \delta^{1-2\gamma} \sum_{i=0}^{n-1}\sum_{j_1,j_2 \geq i+1}(u_{j_1}-u_i)(u_{j_2}-u_i) \nonumber\\
&\int_{i}^{i+1}\left((j_1+1 -s)^{-\gamma} - (j_1-s)^{-\gamma}\right)\left((j_2+1 -s)^{-\gamma} - (j_2-s)^{-\gamma}\right)ds\label{line: expansion}
\end{align}
where in \eqref{line: cv} we take the change of variable $s = r/\delta$ and in \eqref{line: expansion} we expand the square and pull out of the integral the terms without $s$. We denote
\begin{align*}
I(i,j_1,j_2) =
&\int_{i}^{i+1}\left((j_1+1 -s)^{-\gamma} - (j_1-s)^{-\gamma}\right)\left((j_2+1 -s)^{-\gamma} - (j_2-s)^{-\gamma}\right)ds.
\end{align*}
We want to find a bound of the form $C\sum_{i=0}^{n-1}u_i^2$ that will allow to conclude the result using \eqref{boundinversecov}. Expanding the product of differences, using $|ab|\leq \frac{1}{2}(a^2 + b^2)$ and the fact that $I(i,j_1,j_2)\geq 0$, we get the bound 
\begin{align}
||\mathcal{D}_{-}^{\gamma}(f)(t)||_{L^2(\mathbb{R}^+)}^{2}&\leq  C\delta^{1-2\gamma} \sum_{i=0}^{n-1}\sum_{j_1,j_2 \geq i+1}(2u_i^2 + u_{j_1}^2 + u_{j_2}^2)I(i,j_1,j_2) \nonumber\\ 
 &=C\delta^{1-2\gamma} \sum_{i=0}^{n-1}\sum_{j_1,j_2 \geq i+1}(2u_i^2 + 2u_{j_2}^2)I(i,j_1,j_2) 
\label{eq: bound for adjoint norm}
\end{align}
where the last equality is due to the role of $j_1$ and $j_2$ being interchangeable in $I(i, j_1, j_2)$. Thus, we only need to bound the terms
$$
A_i = \sum_{i=0}^{n-1}\sum_{j_1,j_2 \geq i+1}u_i^2 I(i,j_1,j_2), \quad
B_{j_2} = \sum_{i=0}^{n-1}\sum_{j_1,j_2 \geq i+1}u_{j_2}^2I(i,j_1,j_2)
$$

After some tedious computations (included in the Supplementary Material as they are just a matter of calculations and do not add anything to the proof) it is possible to bound all the terms by expressions of the form $C\sum_{i=0}^{n-1}u_i^2$ when $0<\gamma<1/2$, which is true for the case of interest $\gamma = H-1/2$ when $1/2<H<1$.

Plugging this in \eqref{eq: bound for adjoint norm}, we conclude that $\mathcal{D}_{-}^{H-1/2}(f)\in L^2(\mathbb{R}^+)$. Moreover, 
\begin{equation}
||\mathcal{D}_{-}^{H-1/2}(f)(t)||_{L^2(\mathbb{R}^+)}^{2}\leq C\delta^{1-2\gamma}\sum_{i=0}^{n-1}u_i^2.
\end{equation}
Hence, substituting these in \eqref{quadraticbound}, we get that 
\begin{equation*}
u^TP^{n}u \geq \left(\frac{\delta\sum_{i=0}^{n-1}u_i^2}{C\delta^{1/2-\gamma}\left(\sum_{i=0}^{n-1}u_i^2\right)^{1/2}}\right)^2 = C\delta^{1+2\gamma}\sum_{i=0}^{n-1}u_i^2.
\end{equation*}
The result follows from \eqref{boundinversecov}, as 
\begin{equation}
\label{orderlarge}  
||(P^{n})^{-1}||_2 \leq C\delta^{-1-2\gamma} = C\delta^{-2H} = Cn^{2H}
\end{equation}
 
\subsection{Case $H<1/2$.}
\label{subsec: H<1/2}
As before, let $\mathcal{H}$ be the domain of the Wiener integral with respect to fBM \cite{fbmbook}. To construct a lower bound for the norm defined on this space, we can now take advantage of one of the multiple representations this space has, which will in turn allow us to write 
$$||f||_{\mathcal{H}}\geq C ||f||_{L^{2}([0,1])}.$$

First, we note that, for $f\in\mathcal{H}$ (and $H<1/2$), it is possible to write (see \cite{bardina} Theorem 2.5) 
\begin{equation}
\label{eq: rewrite the norm for H<1/2}
||f||_{\mathcal{H}}^{2} = \frac{1}{2}H(1-2H)\int\int_{\mathbb{R}^2}\frac{(f(x) - f(y))^2}{|x-y|^{2-2H}}dxdy.
\end{equation}
 
We now use the fact that (see \cite{hitchhiker} Theorem 6.5) for any compactly supported function $f$ defined in the real line 
\begin{equation}
\label{eq: lower bound on the sobolev seminorm}
\int\int_{\mathbb{R}^2}\frac{(f(x) - f(y))^2}{|x-y|^{2-2H}}dxdy\geq C||f||_{L^{q}([0,1])}
\end{equation}
for any $q\in[2,1/H]$. 

The result is concluded by taking $f=\sum_{i=0}^{n-1}u_i\mathbf{1}_{[i\delta, (i+1)\delta)}$ and putting \eqref{eq: rewrite the norm for H<1/2} and \eqref{eq: lower bound on the sobolev seminorm} together
$$||f||_{\mathcal{H}}^2\geq C||f||_{L^{2}(\mathbb{R})}^2=C||f||_{L^{2}([0,1])}^2 = C\delta\sum_{i=1}^n u_i^2,$$ 
which is equivalent to 
$$||P^{(n)}||_2\geq C\delta.$$ 
We conclude that, if $H<1/2$ 
\begin{equation}
\label{ordersmall}
||(P^{(n)})^{-1}||_2\leq C\delta^{-1}.
\end{equation}
Lemma \ref{orderlemma} is a combination of \eqref{orderlarge} and \eqref{ordersmall} for cases $H>\frac{1}{2}$ and $H<\frac{1}{2}$ respectively.
\section{Proof of Theorem \ref{noconvergence}}
\begin{proof}
Recall that $\left(X_t^\epsilon\right)_{t\geq 0}$ and $\left(Y_t^\epsilon \right)_{t\in\mathbb{R}}$ are both defined in the same space $(\Omega, \mathcal{F}, \mathbb{P})$. By means of Kolmogorov's Continuity Theorem (see for example \cite{yorbook}) there exists a set $A_{T,\epsilon}\in\mathcal{F}$ with $\mathbb{P}(A_{T,\epsilon})=1$ in which, for every $\omega\in A_{T,\epsilon}$, $Y^{\epsilon}_{t}(\omega)$ is a continuous function on $t$ in $[0,T]$. As a consequence, for each $\omega\in A_{T,\epsilon}$, there exists $C_{T,\epsilon}(\omega)$ such that 
$$\max_{t\in[0,T]}|Y^{\epsilon}_t| < C_{T,\epsilon}(\omega).$$
Let now $\omega\in A_{T,\epsilon}$
\begin{equation}
\label{firstbound}
\hat{\sigma}_{\delta, \epsilon}^{2}(\omega) \leq ||P^{-1}||_{2}\frac{1}{N}\sum_{i=0}^{N-1}\left(X^{\epsilon}_{(i+1)\delta}(\omega) -  X^{\epsilon}_{i\delta}(\omega)\right)^{2}
\end{equation}
It follows from the definition of $X^{\epsilon}$ in \eqref{eq: kfbm} that
\begin{align}
0\leq (X^{\epsilon}_{(i+1)\delta}(\omega) - X^{\epsilon}_{i\delta}(\omega))^{2} &= \sigma_1^2\epsilon^{2H-2} \int_{i\delta}^{(i+1)\delta}\int_{i\delta}^{(i+1)\delta}Y^{\epsilon}_s(\omega)Y^{\epsilon}_u(\omega) dsdu \nonumber\\
&\leq \sigma_1^2\epsilon^{2H-2}\delta^2 \max_{t,s\in[0,T]}Y^{\epsilon}_t(\omega) Y^{\epsilon}_s(\omega) \nonumber\\
&\leq \sigma_1^2\epsilon^{2H-2}\delta^2 \left(\max_{t\in[0,T]}|Y^{\epsilon}_t(\omega)|\right)^2
\end{align}
Putting all together and applying lemma \ref{orderlemma}, we get that 
\begin{equation}
\hat{\sigma}_{\delta, \epsilon}^{2}(\omega)\lesssim \epsilon^{2H-2}\delta^{2-2H\vee 1} \left(\max_{t\in[0,T]}|Y^{\epsilon}_t(\omega)|\right)^2 = C_{T,\epsilon}(\omega)^2\delta^{2-2H\vee 1}.
\end{equation}
Then, \eqref{unbiasedno} follows for any $0<H<1$ and $\epsilon>0$ by taking $\delta\to 0$. To prove \eqref{unbiasedia}, recall that the increments of $X^{\epsilon}$ are identically distributed. An application of Fubini's theorem yields
\begin{align*}
\mathbb{E}[(X^{\epsilon}_{\delta} - X_0^{\epsilon})^2] &= \sigma_1^2\epsilon^{2H-2} \int_{0}^{\delta}\int_{0}^{\delta}\mathbb{E}[Y^{\epsilon}_sY^{\epsilon}_t]dsdt
\leq \sigma_1^2\frac{\delta^2}{\epsilon^{2-2H}}\mathbb{E}[(Y^{\epsilon}_0)^2] = \frac{\delta^2}{\epsilon^{2-2H}}\overline{\sigma}^2\Gamma(2H+1)
\end{align*}
where the inequality is due to Cauchy-Schwarz's inequality together with the stationarity of $Y^{\epsilon}_t$. Hence, taking expectation in both sides of \eqref{firstbound}, we get  
$$\mathbb{E}[\hat{\sigma}^{2}_{\delta,\epsilon}]\leq C\frac{\delta^2}{\epsilon^{2-2H}}||P^{-1}||_2 = C\frac{\delta^{2-2H\vee 1}}{\epsilon^{2-2H}}. $$
 Note that, we now assume that $\delta = \epsilon^\alpha$. Therefore, if $0<H<1/2$
$$\mathbb{E}[\hat{\sigma}^{2}_{\delta,\epsilon}]\leq C\frac{\delta}{\epsilon^{2-2H}} = C\epsilon^{\alpha - 2(1-H)}$$
whereas if $1/2\leq H<1$ 
$$\mathbb{E}[\hat{\sigma}^{2}_{\delta,\epsilon}]\leq C\frac{\delta^{2-2H}}{\epsilon^{2-2H}} = C\epsilon^{(2-2H)(\alpha - 1)}.$$
The result follows by taking $\epsilon\to 0$ in both sides.
\end{proof}

\section{Proof of Theorem \ref{maintheorem}}
\label{section: proof of convergence}

\begin{proof}
We begin by decomposing and applying the $L^2$-norm to the estimator as follows 
\begin{align}
\label{eq: estimator decomposition}
\left|\left|\hat{\sigma}^2_{\epsilon,\delta} -\overline{\sigma}^2\right|\right|_{L^2} \leq& \frac{1}{N}\left|\left|\left(\Delta_{\delta}X^{\epsilon} - \overline{\sigma} \Delta_{\delta}B^{H}\right)^T P^{-1}\left(\Delta_{\delta}X^{\epsilon} - \overline{\sigma} \Delta_{\delta}B^{H}\right)\right|\right|_{L^2}\nonumber\\
&+ \frac{2\overline{\sigma}}{N}\left|\left|\left(\Delta_{\delta}B^{H}\right)^T P^{-1}\left(\Delta_{\delta}X^{\epsilon} - \overline{\sigma} \Delta_{\delta}B^{H}\right)\right|\right|_{L^2}\nonumber\\
&+\left|\left|\frac{\overline{\sigma}^2}{N}\left(\Delta_{\delta}B^{H}\right)^T P^{-1}\left(\Delta_{\delta}B^{H}\right) - \overline{\sigma}^2\right|\right|_{L^2}.
\end{align}
The differences may be written as $\Delta_{\delta}X^{\epsilon} - \overline{\sigma} \Delta_{\delta}B^{H} = \sigma_1\epsilon^{H}\Delta_{\delta}Y^{\epsilon}$ (see \cite{ximeli}). 
For the first term we have that 
\begin{align}
\label{asbound1}
\nonumber \left(\Delta_{\delta}X^{\epsilon} - \overline{\sigma} \Delta_{\delta}B^{H}\right)^T P^{-1}\left(\Delta_{\delta}X^{\epsilon} - \overline{\sigma} \Delta_{\delta}B^{H}\right) &\leq ||\Delta_{\delta}X^{\epsilon} - \overline{\sigma} \Delta_{\delta}B^{H}||_{2}^2||P^{-1}||_2\\
&= ||\sigma_1\epsilon^{H}\Delta_{\delta}Y^{\epsilon}||_{2}^2||P^{-1}||_2.
\end{align}
Since $\sigma_1^2||P^{-1}||_2$ is a deterministic quantity, we only need to control the following quantity
\begin{align}
\label{roughboundCS1}
\mathbb{E}\left[||\epsilon^{H}\Delta_{\delta}Y^{\epsilon}||_{2}^4\right] &= \epsilon^{4H}\mathbb{E}\left[\left(\sum_{i=0}^{N-1}(Y^{\epsilon}_{i\delta, (i+1)\delta})^2\right)\left(\sum_{j=0}^{N-1}(Y^{\epsilon}_{j\delta, (j+1)\delta})^2\right)\right]\nonumber\\
&=\epsilon^{4H}\sum_{i,j=0}^{N-1}\mathbb{E}\left[(Y^{\epsilon}_{i\delta, (i+1)\delta})^2(Y^{\epsilon}_{j\delta, (j+1)\delta})^2\right]\nonumber\\
&\leq \epsilon^{4H}\sum_{i,j=0}^{N-1}\mathbb{E}[(Y^{\epsilon}_{i\delta, (i+1)\delta})^4]^{1/2}\mathbb{E}[(Y^{\epsilon}_{j\delta, (j+1)\delta})^4]^{1/2}\nonumber\\
&= \epsilon^{4H}N^{2}\mathbb{E}[(Y^{\epsilon}_{0,\delta})^4] = 3\epsilon^{4H}N^{2}\mathbb{E}[(Y^{\epsilon}_{0,\delta})^2]^2
\end{align}
where we used Cauchy-Schwardz's inequality and the stationarity of the fractional Ornstein-Uhlenbeck process $Y^{\epsilon}$. It follows that 
\begin{align*}
\left| \left| ||\epsilon^{H}\Delta_{\delta}Y^{\epsilon}||_{2}^2 \right|\right|_{L^2} = \left(\mathbb{E}\left[||\epsilon^{H}\Delta_{\delta}Y^{\epsilon}||_{2}^4\right]\right)^{1/2}
\lesssim \epsilon^{2H}N.
\end{align*}
Together with Lemma \ref{orderlemma}, this yields
\begin{equation}
\label{term1}
\frac{1}{N}\left|\left|\left(\Delta_{\delta}X^{\epsilon} - \overline{\sigma} \Delta_{\delta}B^{H}\right)^T P^{-1}\left(\Delta_{\delta}X^{\epsilon} - \overline{\sigma} \Delta_{\delta}B^{H}\right)\right|\right|_{L^2} \lesssim \epsilon^{2H}/\delta^{\beta}
\end{equation}
by just plugging each bound in \eqref{asbound1}. We use a similar approach to bound the second term of \eqref{eq: estimator decomposition}. First, we note that 
\begin{equation}
\label{asbound2}
\left|\left|\left(\overline{\sigma} \Delta_{\delta}B^{H}\right)^T P^{-1}\left(\Delta_{\delta}X^{\epsilon} - \overline{\sigma} \Delta_{\delta}B^{H}\right)\right|\right|_{L^2} \leq ||\overline{\sigma}\Delta_{\delta}B^{H}||_{2}||\sigma_1\epsilon^{H}\Delta_{\delta}Y^{\epsilon}||_{2}||P^{-1}||_2.
\end{equation}
Thus, we now need to control $\left| \left| ||\sigma_1\epsilon^{H}\Delta_{\delta}Y^{\epsilon}||_{2}||\overline{\sigma}\Delta_{\delta}B^{H}||_{2}  \right|\right|_{L^2}$. We write
\begin{align}
\label{roughboundCS2}
\mathbb{E}\left[||\sigma_1\epsilon^{H}\Delta_{\delta}Y^{\epsilon}||_{2}^2||\overline{\sigma}\Delta_{\delta}B^{H}||_{2}^2\right] =& \epsilon^{2H}\overline{\sigma}^2\sigma_1^2\mathbb{E}\left[\left(\sum_{i=0}^{N-1}(Y^{\epsilon}_{i\delta, (i+1)\delta})^2\right)\left(\sum_{j=0}^{N-1}(B^{H}_{j\delta, (j+1)\delta})^2\right)\right]\nonumber\\ 
=&\epsilon^{2H}\overline{\sigma}^2\sigma_1^2\sum_{i,j=0}^{N-1}\mathbb{E}\left[(Y^{\epsilon}_{i\delta, (i+1)\delta})^2(B^{H}_{j\delta, (j+1)\delta})^2\right]\nonumber\\
\leq& \epsilon^{2H}\overline{\sigma}^2\sigma_1^2\sum_{i,j=0}^{N-1}\mathbb{E}[(Y^{\epsilon}_{i\delta, (i+1)\delta})^4]^{1/2}\mathbb{E}[(B^{H}_{j\delta, (j+1)\delta})^4]^{1/2}\nonumber\\
=&\epsilon^{2H}\overline{\sigma}^2\sigma_1^2 N^{2}\mathbb{E}[(Y^{\epsilon}_{0,\delta})^4]^{1/2}\mathbb{E}[(B^{H}_{0,\delta})^4]^{1/2}\nonumber\\
=& 3\epsilon^{2H}\overline{\sigma}^2\sigma_1^2 N^{2}\mathbb{E}[(Y^{\epsilon}_{0,\delta})^2]\mathbb{E}[(B^{H}_{0,\delta})^2]
= 3\epsilon^{2H}\overline{\sigma}^2\sigma_1^2 N^{2}\delta^{2H}.
\end{align}
It follows that
\begin{equation*}
\left| \left| ||\sigma_1\epsilon^{H}\Delta_{\delta}Y^{\epsilon}||_{2}||\overline{\sigma}^2\Delta_{\delta}B^{H}||_{2}  \right|\right|_{L^2} \lesssim N\epsilon^{H}\delta^{H}
\end{equation*}
which, again, together with lemma \ref{orderlemma}, yields
\begin{equation*}
\label{term2}
\frac{2\overline{\sigma}}{N}\left|\left|\left(\Delta_{\delta}B^{H}\right)^T P^{-1}\left(\Delta_{\delta}X^{\epsilon} - \overline{\sigma} \Delta_{\delta}B^{H}\right)\right|\right|_{L^2} \lesssim \epsilon^{H}/\delta^{\beta-H}
\end{equation*}
by just plugging each bound in \eqref{asbound2}. Finally, we control the third term of \eqref{eq: estimator decomposition}, using a Law of Large Numbers type of argument. Since $P$ is the covariance matrix of $\Delta_{\delta}B^{H}$ it holds that 
$$\left( \Delta_{\delta}B^{H}\right)^{T}\left( P\right)^{-1}\left( \Delta_{\delta}B^{H}\right) \sim \chi_{N}^2$$

Therefore, 
\begin{align*}
\left|\left|\frac{\overline{\sigma}^2}{N}\left( \Delta_{\delta}B^{H}\right)^{T}\left( P\right)^{-1}\left( \Delta_{\delta}B^{H}\right)-            \overline{\sigma}^2\right|\right|_{L^2} =&
\overline{\sigma}^2\left( \frac{1}{N^2}\mathbb{E}[(\chi_{N}^2)^2]- 2\frac{1}{N}\mathbb{E}[\chi_{N}^2] + 1\right)^{1/2} \nonumber\\
=& \overline{\sigma}^2\left(\frac{2}{N}\right)^{1/2} 
= \overline{\sigma}^{2}\delta^{1/2}\left(\frac{2}{T}\right)^{1/2}
\end{align*}
Putting everything together we get that, if $H>1/2$
\begin{equation}
\label{eq: order large}
\left|\left|\hat{\sigma}^2_{\epsilon,\delta} -\overline{\sigma}^2\right|\right|_{L^2} \lesssim \left(\epsilon/\delta \right)^{2H} + \left(\epsilon/\delta \right)^{H} + \delta^{1/2}
\end{equation}
while if $H<1/2$
\begin{equation}
\label{eq: order small}
\left|\left|\hat{\sigma}^2_{\epsilon,\delta} -\overline{\sigma}^2\right|\right|_{L^2} \lesssim \epsilon^{2H}/\delta + \epsilon^{H}/\delta^{1-H} + \delta^{1/2}.
\end{equation}
Therefore, if $H>1/2$ convergence is guaranteed provided that $\epsilon/\delta\longrightarrow 0$, whereas if $H<1/2$, we require that $\epsilon^{H}/\delta^{1-H}\longrightarrow 0$. In particular, if $\delta = \epsilon^{\alpha}$ for some $\alpha>0$, convergence will be guaranteed for $0<\alpha<\min\left\{1,\frac{H}{1-H}\right\}$.
\end{proof}
\section{Some considerations on the practical usage of $\hat{\sigma}_{\epsilon,\delta}$}
Until now, we have assumed that $H$ is known. In most practical situations, this will be an unrealistic assumption. Building on the results in \cite{discretetimeinf}, we propose below an estimator for $H$.

\begin{proposition}
Let $\hat{H}_{\delta,\epsilon}$ be defined as
\begin{equation}
\label{eq: estimator of H}
\hat{H}_{\delta,\epsilon} = \frac{1}{2} - \frac{1}{2\ln(2)}\ln\left(\frac{\sum_{k=2}^{2n}\left(\Delta_{k,\delta/2}^{(2)}X^{\epsilon} \right)^2}{\sum_{k=2}^{n}\left(\Delta_{k,\delta}^{(2)}X^{\epsilon} \right)^2} \right) 
\end{equation}
where $\Delta^{(2)}_{k,\delta}X = X_{k\delta}-2X_{(k-1)\delta}+X_{(k-2)\delta}$ represents the second-order differences. Assuming that $
\delta(\epsilon)$ is such that $\delta(\epsilon)\rightarrow0$ and $\epsilon/\delta\rightarrow 0$ as $\epsilon\rightarrow 0$. Then, 
$$\hat{H}_{\delta,\epsilon}\xrightarrow{\epsilon\rightarrow0^+} H \text{ in probability.}$$
\end{proposition}
\begin{proof}
We first derive the estimate 
\begin{equation}
\label{eq: estimate on difference of sums of squares second order differences}
n^{2H-1}\left|\sum_{k=2}^n\left(\Delta_{k,\delta}^{(2)}X^{\epsilon} \right)^2 -\sum_{k=2}^n\left(\overline{\sigma}\Delta_{k,\delta}^{(2)}B^H  \right)^2 \right|\lesssim \left(\epsilon/\delta\right)^{2H} + \left(\epsilon/\delta\right)^{H}
\end{equation}
in $L^2(\Omega)$ by using the fact that
\begin{align*}
\left(\Delta_{k,\delta}^{(2)}X^{\epsilon} \right)^2 -\left(\overline{\sigma}\Delta_{k,\delta}^{(2)}B^H  \right)^2 =&\left(\Delta_{k,\delta}^{(2)}X^{\epsilon}  -\overline{\sigma}\Delta_{k,\delta}^{(2)}B^H  \right)^2\\ &+ 2\left(\Delta_{k,\delta}^{(2)}X^{\epsilon}  -\overline{\sigma}\Delta_{k,\delta}^{(2)}B^H  \right)\left( \overline{\sigma}\Delta_{k,\delta}^{(2)}B^H \right).
\end{align*}
Recall that $X_{s,t}^{\epsilon} -\overline{\sigma} B_{s,t}^{H} =\epsilon^H\sigma_1 Y_{s,t}^{\epsilon}$ for any values of $s,t$. It follows that 
$$\Delta_{k,\delta}^{(2)}X^{\epsilon}  -\overline{\sigma}\Delta_{k,\delta}^{(2)}B^H  = \epsilon^H \sigma_1\Delta_{k,\delta}^{(2)}Y^{\epsilon}$$
Then, under the assumption of stationarity of $Y^{\epsilon}$, a direct applications of Cauchy-Schwardz's inequality leads to the following the estimates 
\begin{align*}
\epsilon^{2H} \sigma_1^2\left|\left|(\Delta_{k,\delta}^{(2)}Y^{\epsilon})^2\right|\right|_{L^2}&\lesssim \epsilon^{2H}\\
\epsilon^{H}\overline{\sigma}\sigma_1 \left|\left|(\Delta_{k,\delta}^{(2)}Y^{\epsilon})(\Delta_{k,\delta}^{(2)}B^{H})\right|\right|_{L^2}&\lesssim \epsilon^{H}\delta^{H}
\end{align*}
With these estimates, the result can be obtained following the same strategy as in \cite{discretetimeinf}.
\end{proof}
In the supplement, we provide some numerical evidence of the behavior of this estimator and how the interaction of $\epsilon$ and $\delta$ determines its performance.

A natural concern is the effect of a poorly estimated $H$ on the estimation of $\overline{\sigma}$. Although relatively large departures in the estimation of $H$ have an impact on the estimation, we also provide experimental evidence that the estimator is continuous in $H$, in some sense. Small deviations in $H$ lead to small fluctuations in the estimation, but they do not trigger unexpected blow-up or vanishing behaviors. 

\begin{acks}
We would like to thank the anonymous referees for the very useful comments and suggestions. HB and AP acknowledge support by EPSRC grant EP/W00707X/1. PRAM acknowledges funding from the Mathematics and Statistics CDT at University of Warwick. 
\end{acks}
\appendix
\section{Supplementary Material: Bounds for $A_i$ and $B_{j_2}$ in Proof 3.3.}
\label{sec: details of calculations}
We include here the details of how the bounds for the different terms of the $L^2(\mathbb{R}^+)$ norm of $\mathcal{D}_{-}^{H-1/2}(f)$ are calculated in the proof of lemma 3.3 when $f=\sum_{i=0}^{n-1}u_i\mathbf{1}_{(i\delta, (i+1)\delta]}$. 
\subsection{Bound for $A_i$}
We first deal with the terms $j_1,j_2\geq i+2$: 
\begin{align*}
&\sum_{i=0}^{n-1}\sum_{j_1,j_2\geq i+2}u_i^2\int_i^{i+1}\left((j_1+1-s)^{-\gamma} - (j_1-s)^{-\gamma}\right)\left((j_2+1-s)^{-\gamma} - (j_2-s)^{-\gamma}\right)ds
\\
\leq&\sum_{i=0}^{n-1}\sum_{j_1,j_2\geq i+2}u_i^2\int_i^{i+1}\gamma^2(j_1-s)^{-\gamma-1}(j_2-s)^{-\gamma-1}ds \\
\leq&\gamma^2 \sum_{i=0}^{n-1}u_i^2\sum_{j_1,j_2\geq i+2}\gamma^2(j_1-i-1)^{-\gamma-1}(j_2-i-1)^{-\gamma-1}
= \gamma^2\sum_{i=0}^{n-1}u_i^2\left(\sum_{k=1}^{n-(i+2)}k^{-\gamma-1}\right)^2\\
<&\gamma^2\sum_{i=0}^{n-1}u_i^2\left(\sum_{k=1}^{\infty}k^{-\gamma-1}\right)^2 = C\sum_{i=0}^{n-1}u_i^2
\end{align*}
where the first inequality is due to the Mean Value Theorem applied to both terms of the integrand and the last equality holds since $-\gamma-1 = -H-1/2<-1$. Note that within this term the role of $j_1$ and $j_2$ is still interchangeable. Therefore, we only need to analyse now the terms corresponding to $j_2>j_1=i+1$ and the term $j_2 = j_1=i+1$, since the terms $j_1\geq j_2$ are the same. 
\begin{align}
&\sum_{i=0}^{n-1}u_i^2\sum_{j_1 = i+2}^{n-1}\int_{i}^{i+1}\left((j_1+1-s)^{-\gamma} - (j_1-s)^{-\gamma}\right) \left((i+2-s)^{-\gamma} - (i+1-s)^{-\gamma}\right)ds \nonumber\\
\leq&\sum_{i=0}^{n-1}u_i^2\sum_{j_1=i+2}^{n-1}\int_{i}^{i+1}-\gamma (j_1-s)^{-\gamma-1}\left((i+2-s)^{-\gamma} - (i+1-s)^{-\gamma}\right)ds 
\nonumber\\
\leq & \sum_{i=0}^{n-1}u_i^2\int_{i}^{i+1}-\gamma \left((i+2-s)^{-\gamma} - (i+1-s)^{-\gamma}\right)
\left(\sum_{j_1=i+2}^{n-1}(j_1-i-1)^{-\gamma-1}\right)ds 
\label{line: geometric sum} \\
< &\sum_{i=0}^{n-1}u_i^2\int_{0}^{1}-\gamma \left((2-r)^{-\gamma} - (1-r)^{-\gamma}\right)\left(\sum_{k=1}^{n-(i+2)}k^{-\gamma-1}\right)ds \nonumber\\
= & C\sum_{i=0}^{n-1}u_i^2 \left[\frac{(2-r)^{-\gamma+1}}{1-\gamma}+\frac{(1-r)^{-\gamma+1}}{1-\gamma}\right]_{r=0}^{r=1}
= C\sum_{i=0}^{n-1}u_i^2 \nonumber
\end{align}
where the geometric sum in \eqref{line: geometric sum} is bounded as before and the integrand integrates to a constant as $0<\gamma<1/2$.

We now bound the terms corresponding to $j_1=j_2=i+2$:
\begin{align*}
&\sum_{i=0}^{n-1}u_i^2\int_{i}^{i+1}\left((i+2-s)^{-\gamma} - (i+1-s)^{-\gamma}\right)^2ds \\
&\leq 2\sum_{i=0}^{n-1}u_i^2\int_0^1\left((2-r)^{-2\gamma} + (1-r)^{-2\gamma}\right)dr=C\sum_{i=0}^{n-1}u_i^2
\end{align*}
where again the integral is just a constant as $0<2\gamma<1$.
\subsection{Bound for $B_{j_2}$}
We use similar techniques to the ones used for $A_i$, noting that more care should be taken as the role of $j_1$ and $j_2$ will no longer be interchangeable as $j_2$ appears multiplying $I(i, j_1, j_2)$. We first split the sum as follows: 
\begin{align*}
\sum_{i=0}^{n-1}\sum_{j_1,j_2\geq i+1}u_{j_2}^2I(i,j_1,j_2)=\sum_{i=0}^{n-1}\sum_{j_1\geq j_2\geq i+1}u_{j_2}^2I(i,j_1,j_2)+ \sum_{i=0}^{n-1}\sum_{j_2>j_1\geq i+1}u_{j_2}^2I(i,j_1,j_2)
\end{align*}
and we bound each term separately. For the first one we can proceed as follows:
\begin{align*}
&\sum_{i=0}^{n-1}\sum_{j_1\geq j_2\geq i+1}u_{j_2}^2I(i,j_1,j_2) = \sum_{j_1\geq j_2\geq 1}\sum_{i=0}^{j_2-1} u_{j_2}^2I(i,j_1,j_2)\\
=&\sum_{j_2\geq j_1\geq 1}u_{j_2}^2 \int_0^{j_2}\left((j_1+1-s)^{-\gamma} - (j_1-s)^{-\gamma}\right)\left((j_2+1-s)^{-\gamma} - (j_2-s)^{-\gamma}\right)ds \\
=& \sum_{j_2=1}^{n-1}u_{j_2}^2\int_{0}^{j_2}\sum_{j_1=j_2}^{n-1}\left((j_1+1-s)^{-\gamma} - (j_1-s)^{-\gamma}\right)\left((j_2+1-s)^{-\gamma} - (j_2-s)^{-\gamma}\right)ds\\
=&\sum_{j_2=1}^{n-1}u_{j_2}^2\int_0^{j_2}\left((n-s)^{-\gamma} - (j_2-s)^{-\gamma}\right)\left((j_2+1-s)^{-\gamma} - (j_2-s)^{-\gamma}\right)ds
\end{align*}
Note that, for $s$ in $[0,j_2)$, it holds that $$\left((n-s)^{-\gamma} - (j_2-s)^{-\gamma}\right)\left((j_2+1-s)^{-\gamma} - (j_2-s)^{-\gamma}\right)\leq 2(j_2-s)^{-2\gamma}$$
Therefore, 
\begin{align*}
&\sum_{j_2=1}^{n-1}u_{j_2}^2\int_0^{j_2}\left((n-s)^{-\gamma} - (j_2-s)^{-\gamma}\right)\left((j_2+1-s)^{-\gamma} - (j_2-s)^{-\gamma}\right)ds \\
&\leq \sum_{j_2=1}^{n-1}u_{j_2}^2\int_0^{j_2}2(j_2-s)^{-2\gamma}ds = 2\sum_{j_1=1}^{n-1}u_{j_2}^2\left[\frac{(j_2-s)^{-2\gamma+1}}{-2\gamma + 1} \right]_0^{j_2}\leq C\sum_{j_2=0}^{n-1}u_{j_2}^2.
\end{align*}
For the remaining terms, we use the following procedure:
\begin{align*}
   &\sum_{i=0}^{n-1}\sum_{j_2>j_1\geq i+1}u_{j_2}^2I(i,j_1,j_2)\\ =&
   \sum_{i=0}^{n-1}\sum_{j_2 = i+2}^{n-1}u_{j_2}^2\int_{i}^{i+1}\sum_{j_1 = i+1}^{j_2-1}\left((j_1+1-s)^{-\gamma} -(j_1-s)^{-\gamma}\right) \left((j_2+1-s)^{-\gamma} - (j_2-s)^{-\gamma}\right)ds\\
   =&\sum_{i=0}^{n-1}\sum_{j_2 = i+2}^{n-1}u_{j_2}^2\int_{i}^{i+1}\left((j_2-s)^{-\gamma} - (i+1-s)^{-\gamma}\right)
   \left((j_2+1-s)^{-\gamma} - (j_2-s)^{-\gamma}\right)ds\\
   \leq&\sum_{i=0}^{n-1}\sum_{j_2 = i+2}^{n-1}u_{j_2}^2\int_{i}^{i+1} - (i+1-s)^{-\gamma}\left((j_2+1-s)^{-\gamma} - (j_2-s)^{-\gamma}\right)ds\\
   \leq& \gamma\sum_{j_2 = 2}^{n-1}u_{j_2}^2\sum_{i=0}^{j_2-2}\int_{i}^{i+1}(i+1-s)^{-\gamma}(j_2-s)^{-\gamma-1}ds\\
   \leq& \gamma\sum_{j_2 = 2}^{n-1}u_{j_2}^2\sum_{i=0}^{j_2-2}(j_2-(i+1))^{-\gamma-1}\int_{i}^{i+1}(i+1-s)^{-\gamma}ds \\
   =& \frac{\gamma}{1-\gamma}\sum_{j_2}u_{j_2}^{2}\sum_{k=1}^{j_2-1}k^{-\gamma-1} < C\sum_{j_2=0}^{n-1}u_{j_2}^{2}
\end{align*}
\section{Supplementary Numerical Experiments.}
\subsection{Estimation of $H$}
We illustrate here some results of the estimation of $H$ when the true values were $0.3, 0.5$ and $0.7$. We set $\overline{\sigma}=1$ for all cases and we simulate sample paths of $X_t^{\epsilon}$ using an Euler-Maruyama scheme and a standard rectangle rule for numerical integration with step size $\Delta_t = \frac{1}{2}10^{-4}$ and $100$ sample paths are simulated for each parameter combination to compute sample means. The time horizon is fixed to $T=10$ in all cases to ensure each subsampled set gets enough points.

The results for different values of $\alpha$ emphasize the need to minimize $\delta$ at the same time as the ratio $\epsilon/\delta$ which is indeed a trade-off for a fixed realization. For a given sample path, when $\alpha$ is small the step size in the subsample has gone too wide leading to poor estimations, especially in terms of the variance. On the other hand, when $\alpha$ grows too much the compared to $\epsilon$ the convergence of the estimator gets slower due to $X^{\epsilon}$ not being close enough to $B^H$ in terms of squared increments as seen in (7.2). Values in the middle instead lead to good estimation in terms of both bias and variance, usually around $\alpha=0.5$ as seen below. 
\begin{table}[H]
\centering
\begin{tabular}{|l| |c|c|c|c|c|c|}
\hline
$\epsilon \backslash \alpha$     & 0.1 & 0.3 & 0.5 & 2/3 & 0.9 & 1 \\
\hline
\hline
0.01              & 0.16830 & 0.31319 & 0.45312 & 0.64869 & 0.9964 &  1.10033\\
\hline
0.001             & 0.23840 & 0.28555 & 0.34747 & 0.46094 & 0.94598 & 1.08766\\
\hline
0.0001            & 0.23301 & 0.30405 & 0.31743 & 0.35108 & 0.70847 & 0.73053\\
\hline

\end{tabular}
\label{table: bla bla}
\caption{Means for $H=0.3$.}
\end{table}

\begin{table}[H]
\centering
\begin{tabular}{|l| |c|c|c|c|c|c|}
\hline
$\epsilon \backslash \alpha$     & 0.1 & 0.3 & 0.5 & 2/3 & 0.9 & 1 \\
\hline
\hline
0.01              & 0.41868 & 0.50932 & 0.62304 & 0.82310 & 1.15433 & 1.26152\\
\hline
0.001             &0.35710 & 0.51826 & 0.53115 & 0.63189 & 1.11434 & 1.25519\\
\hline
0.0001            &0.44367 & 0.48776 & 0.51009 & 0.53981 & 0.99066 & 0.95352\\
\hline

\end{tabular}
\label{table: bla bla}
\caption{Means for $H=0.5$.}
\end{table}
\begin{table}[H]
\centering
\begin{tabular}{|l| |c|c|c|c|c|c|}
\hline
$\epsilon \backslash \alpha$     & 0.1 & 0.3 & 0.5 & 2/3 & 0.9 & 1 \\
\hline
\hline
0.01              & 0.68404 & 0.65385 & 0.80445 & 0.98293 & 1.30505 & 1.41248\\
\hline
0.001             & 0.61798 & 0.67400 &  0.71797 & 0.81158 & 1.27097 & 1.40650\\
\hline
0.0001            & 0.64902 & 0.67946 & 0.69738 & 0.73340 & 1.23520 & 1.15784\\
\hline

\end{tabular}
\label{table: bla bla}
\caption{Means for $H=0.7$.}
\end{table}

\begin{table}[H]
\centering
\begin{tabular}{|l| |c|c|c|c|c|c|}
\hline
$\epsilon \backslash \alpha$     & 0.1 & 0.3 & 0.5 & 2/3 & 0.9 & 1 \\
\hline
\hline
0.01              & 0.51159 & 0.24122 & 0.10433 & 0.08539 & 0.04016 & 0.02777\\
\hline
0.001             & 0.38315 & 0.15750 & 0.07712 & 0.04010 & 0.01562 & 0.00894\\
\hline
0.0001            & 0.28052 & 0.12071 & 0.04378 & 0.02162 & 0.00686 & 0.00350\\
\hline

\end{tabular}
\label{table: bla bla}
\caption{Standard deviations for $H=0.3$.}
\end{table}
\begin{table}[H]
\centering
\begin{tabular}{|l| |c|c|c|c|c|c|}
\hline
$\epsilon \backslash \alpha$     & 0.1 & 0.3 & 0.5 & 2/3 & 0.9 & 1 \\
\hline
\hline
0.01              & 0.41286 & 0.24802 & 0.11882 & 0.07566 & 0.03306 & 0.02376\\
\hline
0.001             & 0.33681 & 0.13741 & 0.07351 & 0.03770 & 0.01375 & 0.00694\\
\hline
0.0001            & 0.27090 & 0.10555 & 0.03600 & 0.02176 & 0.00621 & 0.00320\\
\hline

\end{tabular}
\label{table: bla bla}
\caption{Standard deviations for $H=0.5$.}
\end{table}
\begin{table}[H]
\centering
\begin{tabular}{|l| |c|c|c|c|c|c|}
\hline
$\epsilon \backslash \alpha$     & 0.1 & 0.3 & 0.5 & 2/3 & 0.9 & 1 \\
\hline
\hline
0.01              & 0.33537 & 0.19520 & 0.11140 & 0.06598 & 0.02655 & 0.02046\\
\hline
0.001             & 0.28493 & 0.13076 & 0.06440 & 0.03185 & 0.01129 & 0.00710\\
\hline
0.0001            & 0.23104 & 0.08949 & 0.03452 & 0.01508 & 0.00503 & 0.00247\\
\hline
\end{tabular}
\label{table: bla bla}
\caption{Standard deviations for $H=0.7$.}
\end{table}
\subsection{Robustness of $\hat{\sigma}_{\delta,\epsilon}^2$ with respect to $H$.}
In this section, we include a numerical simulation of the robustness of the plug-in MLE estimator against a mispecification of $H$. Empirical means are computed in the exact same way as in the previous section, setting $\overline{\sigma}=1$ for all cases too. We artificially contaminate $H$ by using instead $\hat{H}=H+\eta$ where $\eta$ is taken with random sign and decreasing magnitude. We demonstrate empirically that the estimator is continuous with respect to $H$ in the sense that small deviations in $H$ lead to small departures in the estimation. We measure this by comparing the estimator with a contaminated choice of $H$ with the the value obtained when using the true $H$ for each realization of a sample path. 

We perform this experiment for $H=0.3, 0.5$ and $0.7$. To ensure convergence of the true estimator we take $\epsilon=0.0001$ and subsample from the generated paths with rate $\delta = \epsilon^{\alpha}$ for $\alpha = 0.42, 2/3$ and $0.58$ respectively. For $H=1/2$ we set $T=1$ whereas for the other two cases we enlarge this to $T=10$ to make sure there is enough data. 

\begin{table}[H]
\centering
\begin{tabular}{|c| |c|c|c|c|c|c|}
\hline
$|\eta|$     & 0.03 & 0.01 & 0.05 & 0.001\\
\hline
\hline
$\mathbb{E}[\hat{\sigma}_{\tilde{H}}^2]$          & 0.96666 & 0.97665 & 0.98217 & 0.97741\\
\hline
$||\hat{\sigma}_{\tilde{H}}^2 - \hat{\sigma}_{H}^2||_{L^2}  $             & 0.19908 & 0.06682 & 0.03366 & 0.00669\\
\hline
\end{tabular}
\label{table: bla bla}
\caption{$H=0.3$.}
\end{table}

\begin{table}[H]
\centering
\begin{tabular}{|c| |c|c|c|c|c|c|}
\hline
$|\eta|$     & 0.03 & 0.01 & 0.05 & 0.001\\
\hline
\hline
$\mathbb{E}[\hat{\sigma}_{\tilde{H}}^2]$          & 1.02740 & 0.96911 & 0.96971 & 0.96867\\
\hline
$||\hat{\sigma}_{\tilde{H}}^2 - \hat{\sigma}_{H}^2||_{L^2}  $             & 0.36978 & 0.11841 & 0.05929 & 0.01188\\
\hline
\end{tabular}
\label{table: bla bla}
\caption{$H=0.5$.}
\end{table}

\begin{table}[H]
\centering
\begin{tabular}{|c| |c|c|c|c|c|c|}
\hline
$|\eta|$     & 0.03 & 0.01 & 0.05 & 0.001\\
\hline
\hline
$\mathbb{E}[\hat{\sigma}_{\tilde{H}}^2]$          & 1.09217 & 1.00776 & 0.99679 & 0.99127\\
\hline
$||\hat{\sigma}_{\tilde{H}}^2 - \hat{\sigma}_{H}^2||_{L^2}  $             & 0.38860 & 0.12298 & 0.06105 & 0.01215\\
\hline
\end{tabular}
\label{table: bla bla}
\caption{$H=0.7$.}
\end{table}
\end{document}